\date{July 31, 2016}
\newtheorem{Theorem}{Theorem}
\newtheorem{Lemma}[Theorem]{Lemma}
\newtheorem{Proposition}[Theorem]{Proposition}
\newtheorem{Corollary}[Theorem]{Corollary}
\newtheorem{Conjecture}[Theorem]{Conjecture}
\theoremstyle{definition}
\newtheorem{Definition}[Theorem]{Definition}
\newtheorem{Example}[Theorem]{Example}
\newtheorem{Remark}[Theorem]{Remark}
\newtheorem{Claim}{Claim}
\providecommand{\ch}[1]{\text{\raise 2pt \hbox{$\chi$}\kern-0.2pt}_{#1}}
\def\XXint#1#2#3{{\setbox0=\hbox{$#1{#2#3}{\int}$}
      \vcenter{\hbox{$#2#3$}}\kern-.5\wd0}}
\theoremstyle{plain}
\newtheorem{Theorem}{Theorem}[section]
\newtheorem{Lemma}[Theorem]{Lemma}
\newtheorem{Proposition}[Theorem]{Proposition}
\newtheorem{Conjecture}[Theorem]{Conjecture}
\theoremstyle{Definition}
\newtheorem{Definition}[Theorem]{Definition}
\newtheorem{Example}[Theorem]{Example}
\theoremstyle{Remark}
\newtheorem{Remark}[Theorem]{Remark}
\newtheorem{Claim}[Theorem]{Claim}
\theoremstyle{plain}
\newtheorem{Theoreme}{Th√É¬àor√É¬ãme}[chapter)

\newtheorem{Proposition}[Theoreme]{Proposition}
\theoremstyle{Definition}
\theoremstyle{Remark}
\newcommand{\N}{\mathbb{N}}     %natural numbers
\newcommand{\R}{\mathbb{R}}     %real numbers
\newcommand{\calB}{\mathscr{B}}
\newcommand{\calR}{\mathscr{R}}
\newcommand{\btheta}{\boldsymbol{\theta}}
\DeclareMathOperator{\diam}{diam}			%diameter
\renewcommand{\geq}{\geqslant}
\renewcommand{\leq}{\leqslant}
\renewcommand{\epsilon}{\varepsilon}
\newcommand{\liminfe}{\mathop{\underline{\lim}}}
\newcommand{\limsupe}{\mathop{\overline{\lim}}}
\begin{document}
\author{Laurent Moonens}

\title[Differentiation in lacunary directions]{Differentiating along rectangles\\ in lacunary directions}

\begin{abstract}We show that, given some lacunary sequence of angles $\btheta=(\theta_j)_{j\in\N}$ not converging too fast to zero, it is possible to build a rare differentiation basis $\calB$ of rectangles parallel to the axes that differentiates $L^1(\R^2)$ while the basis $\calB_{\btheta}$ obtained from $\calB$ by allowing its elements to rotate around their lower left vertex by the angles $\theta_j$, $j\in\N$, fails to differentiate all Orlicz spaces lying between $L^1(\R^2)$ and $L\log L(\R^2)$.\end{abstract}

\subjclass[2010]{Primary 42B25; Secondary 26B05}

\thanks{This work was partially supported by the French ANR project ``GEOMETRYA'' no.~ANR-12-BS01-0014.}

\address{Laboratoire de Math\'ematiques d'Orsay, Universit\'e Paris-Sud, CNRS UMR8628, Universit\'e Paris-Saclay, B\^atiment 425, F-91405 Orsay Cedex, France.}

\email{Laurent.Moonens@math.u-psud.fr}
     
\maketitle

\section{Introduction}

Assume that $\btheta=(\theta_j)_{j\in\N}\subseteq (0,2\pi)$ is a lacunary sequence going to zero and denote by $\calB_{\btheta}$ the set of all rectangles in $\R^2$, one of whose sides makes an angle $\theta_j$ with the horizontal axis, for some $j\in\N$. It follows from results by {\scshape C\'ordoba} and {\scshape Fefferman} \cite{CF1977} (for $p>2$) and {\scshape Nagel, Stein} and {\scshape Wainger} \cite{NSW1978} (for all $p>1$) that for every $f\in L^p(\R^2)$, one has:
\begin{equation}\label{eq.db}
f(x)=\lim_{\begin{subarray}{c}R\in\calB_{\btheta}\\ R\ni x\\\diam R\to 0\end{subarray}} \frac{1}{|R|}\int_ R f,
\end{equation}
for almost every $x\in\R^2$ (we say, in this case, that $\calB_{\btheta}$ \emph{differentiates $L^p(\R^n)$}). This is often equivalent, according to Sawyer-Stein principles (see \emph{e.g.} {\scshape Garsia} \cite[Chapter 1]{GARSIA}), to the fact that the associated maximal operator $M_\calB$, defined for measurable functions $f$ by:
$$
M_\calB f(x):=\sup_{\begin{subarray}{c}R\in\calB\\R\ni x\end{subarray}} \frac{1}{|R|}\int_ R |f|,
$$
satisfies a weak $(p,p)$ inequality, \emph{i.e.} verifies:
$$
|\{M_\calB f>\alpha\}|\leq\frac{C}{\alpha^p}\int_{\R^2} |f|^p,
$$
for all $\alpha>0$ and all $f\in L^p(\R^2)$. By interpolation, of course, such a property for all $p>1$ implies that $M_\calB$ sends boundedly $L^p(\R^n)$ into $L^p(\R^n)$ for all $p>1$.

Since then, the $L^p$ ($p>1$) behaviour of the operators $M_{\calB_{\btheta}}$ has been studied when the lacunary sequence $\btheta$ is replaced by some Cantor sets (see \emph{e.g.} {\scshape Katz} \cite{KATZ1996} and {\scshape Hare} \cite{HARE2000}); recently, {\scshape Bateman} \cite{BATEMAN} obtained necessary and sufficient (geometrical) conditions on $\btheta$ providing the $L^p$ boundedness of $M_{\calB_{\btheta}}$.

In this paper we explore the behaviour of some maximal operators associated to rare differentiation bases of rectangles oriented in a lacunary set of directions $\btheta=\{\theta_j:j\in\N\}$, provided that the sequence $(\theta_j)$ does not converge too fast to zero. More precisely, we prove the following theorem.
\begin{Theorem}\label{thm.1}
Given a lacunary sequence $\btheta=(\theta_j)_{j\in\N}\subseteq (0,2\pi)$ satisfying:
$$
0<\liminfe_{j\to\infty} \frac{\theta_{j+1}}{\theta_j}\leq \limsupe_{j\to\infty} \frac{\theta_{j+1}}{\theta_j}<1,
$$
there exists a differentiation basis $\calB$ of rectangles parallel to the axes satisfying the two following properties:
\begin{enumerate}
\item[(i)] $M_\calB$ has weak type $(1,1)$ (in particular $\calB$ differentiates $L^1(\R^2)$);
\item[(ii)] if we denote by $\calB_{\btheta}$ the differentiation basis obtained from $\calB$ by allowing its elements to rotate around their lower left corner by any angle $\theta_j$, $j\in\N$, then for any Orlicz function $\Phi$ (see below for a definition) satisfying $\Phi=o(t\log_+ t)$ at $\infty$, the maximal operator$M_{\calB_{\btheta}}$ \emph{fails to have} weak type $(\Phi,\Phi)$ (in particular $\calB_{\btheta}$ fails to differentiate $L^\Phi(\R^n)$).
\end{enumerate}
\end{Theorem}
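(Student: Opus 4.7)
I would construct $\calB$ as a homothecy-invariant collection of axis-parallel rectangles whose eccentricities match the reciprocals of the angles in $\btheta$, and then build, for each $N$, a Saks/Perron-type bad example for $\calB_\btheta$.

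\emph{Choice of $\calB$ and proof of (i).} Let $\calB:=\bigcup_{j\in\N}\calB_j$, where $\calB_j$ is the collection of all axis-parallel rectangles of aspect ratio $1/\theta_j$ (with arbitrary position and scale). Since $(\theta_j)$ is lacunary and decreases to $0$, the eccentricity set $\{1/\theta_j\}_{j\in\N}$ is lacunary in $[1,\infty)$. A Hayes-type covering argument for axis-parallel rectangles with lacunary eccentricities (exploiting the geometric gap between consecutive $1/\theta_j$ to extract subcovers of bounded overlap) then shows $M_\calB$ is of weak type $(1,1)$; hence $\calB$ differentiates $L^1(\R^2)$.

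\emph{Geometric mechanism for (ii).} The rectangle $R=[0,s]\times[0,s\theta_j]\in\calB_j$, rotated around its lower-left vertex by $\theta_j$, becomes a tilted rectangle $\tilde R\in\calB_\btheta$ whose orthogonal projection onto the vertical axis has length $s(\sin\theta_j+\theta_j\cos\theta_j)\approx 2s\theta_j$: the rotation essentially doubles the vertical extent of $R$ and displaces an elongated tube above it. This matching between the $j$-th eccentricity $1/\theta_j$ and the $j$-th rotation angle $\theta_j$ is the geometric source of the Kakeya-type failure; without it, rotation would be harmless for the rare axis-parallel basis.

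\emph{Bad configuration and conclusion.} For each $N\in\N$, I would construct a small set $E_N\subseteq\R^2$, realized as a union of $N$ rotated rectangles $\tilde R_j\in\calB_\btheta$ at the angles $\theta_j$ ($j=1,\ldots,N$), together with a much larger ``shadow'' set $F_N$ with $|F_N|/|E_N|\gtrsim\log N$, such that every point of $F_N$ lies in some element of $\calB_\btheta$ overlapping $E_N$ with density at least $c/N$. The configuration mimics a Perron tree adapted to the rare rotated basis: the positions of the $R_j$'s are chosen along a carefully decreasing sequence so that (a) the rotated tubes are disjoint enough for the $\log N$ gain (using $\limsup\theta_{j+1}/\theta_j<1$), and (b) enough ``shifted-shadow'' rectangles remain available inside the rare basis $\calB_\btheta$ to cover $F_N$ (using $\liminf\theta_{j+1}/\theta_j>0$). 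Setting $f_N:=N\chi_{E_N}$, one obtains $M_{\calB_\btheta}f_N\geq 1$ on $F_N$; any weak $(\Phi,\Phi)$ inequality would then yield $\Phi(1)|F_N|\lesssim\Phi(N)|E_N|$, so $\Phi(N)\gtrsim N\log N$, contradicting $\Phi=o(t\log t)$ as $N\to\infty$.

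\emph{Main obstacle.} The crux is the explicit geometric construction in the third step: producing $E_N$ and $F_N$ with a logarithmic Kakeya gain \emph{inside the rare basis} $\calB_\btheta$ — not inside the full basis of all rectangles at angles $\theta_j$, where the Nagel--Stein--Wainger theorem would preclude such behaviour. Both halves of the lacunarity hypothesis enter critically here: the upper bound $\limsup\theta_{j+1}/\theta_j<1$ ensures consecutive directions are well separated for the Kakeya-type disjointness, while the lower bound $\liminf\theta_{j+1}/\theta_j>0$ prevents angles from collapsing so fast as to obstruct the logarithmic gain and the matching between eccentricities and rotation angles.
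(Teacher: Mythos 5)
There is a fatal gap in your treatment of (i). The basis you propose --- \emph{all} axis-parallel rectangles whose eccentricity lies in the lacunary set $\{1/\theta_j\}$, with arbitrary position \emph{and arbitrary scale} --- does not have a weak $(1,1)$ maximal operator. Since $\liminf_j\theta_{j+1}/\theta_j>0$, every eccentricity $e\geq 1/\theta_0$ is within a bounded factor of some $1/\theta_j$, so for $E=[0,\delta]^2$ and any $x=(x_1,x_2)$ with $x_1\geq x_2>\delta$ and $x_1/x_2\geq 1/\theta_0$ one can place a rectangle of admissible eccentricity containing both $E$ and $x$ with $|R|\lesssim x_1x_2$; hence $M_\calB\chi_E(x)\gtrsim \delta^2/(x_1x_2)$ and $|\{M_\calB\chi_E>\alpha\}|\gtrsim \alpha^{-1}|E|\log(1/\alpha)$. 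In other words, your $\calB$ already reproduces the $L\log L$ behaviour of the strong maximal operator (Saks' counterexample survives the restriction to lacunary eccentricities), so no ``Hayes-type bounded-overlap subcover'' can exist. The point of the theorem is precisely that the basis must be \emph{rare}: the paper takes a single nested sequence $Q_{k+1}\subseteq Q_k$ of standard rectangles (one rectangle per scale, aspect ratio $L_k/\ell_k\asymp\lambda^{-k}$, chosen via Lemma~\ref{cl.2}) and closes it only under translations; weak type $(1,1)$ is then immediate because the family is totally ordered by inclusion (\cite[Claim~1]{STOKOLOS2005}). Note also that the relevant matching is not ``eccentricity of the $j$-th rectangle $=1/\theta_j$'' but rather ``eccentricity of $Q_k$ $\asymp$ reciprocal of the angular gaps among the first $k$ directions,'' which is where both halves of the lacunarity hypothesis enter.

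For (ii) your outline has the right flavour but no actual construction: the set $E_N$, the shadow $F_N$, and the verification of the $\log N$ gain are all deferred. The paper's configuration is considerably simpler than a Perron tree: it takes the $k$ rotations $r_\theta Q_k$, $\theta\in\btheta_k$, hinged at the origin, whose outer halves are pairwise disjoint by Lemma~\ref{cl.1}, so that $Y_k:=\bigcup_\theta r_\theta Q_k$ has measure $\geq \tfrac k2|Q_k|\gtrsim k\lambda^{-k}|B(0,\ell_k)|$, while each $r_\theta Q_k$ meets the disk $\Theta_k=B(0,\ell_k)$ with density $\asymp \ell_k/L_k\asymp\lambda^k$. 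Testing against $f_k=\kappa'(\mu)^{-1}\lambda^{-k}\chi_{\Theta_k}$ gives $M_{\calB_{\btheta}}f_k\geq 1$ on $Y_k$ with $|Y_k|\gtrsim\int\Phi_0(f_k)$, and $\int\Phi(Cf_k)=o\bigl(\int\Phi_0(f_k)\bigr)$ yields the contradiction. If you repair (i) by passing to such a rare nested family, your step (ii) should be rewritten around this fan construction rather than a general Kakeya-type set.
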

\begin{Remark}
The differentiation basis $\calB$ we shall construct in the proof of Theorem~\ref{thm.1} is \emph{rare}: it will be obtained as the smallest translation-invariant basis containing a \emph{countable} family of rectangles with lower left corner at the origin (see section~\ref{sec.3} for a more precise statement). 
\end{Remark}

Our paper is organized as follows: we first discuss some easy geometrical facts concerning rectangles and rotations along lacunary sequences, following with a proof of Theorem~\ref{thm.1}.

\section{Some basic geometrical facts}

In the sequel we always call \emph{standard rectangle} in $\R^2$ a set of the form $Q=[0,L]\times [0,\ell]$ where $L>0$ and $\ell>0$ are real numbers; we then let $Q_+:=[L/2,L]\times [0,\ell]$. For $\theta\in [0,2\pi)$ we also denote by $r_\theta$ the (counterclockwise) rotation of angle $\theta$ around the origin.

\begin{Lemma}\label{cl.1}
Fix real numbers $0\leq\vartheta<\theta<\frac{\pi}{2}$ and $0<2\ell<L$ and let $Q:=[0,L]\times [0,\ell]$. If moreover one has $ \tan(\theta-\vartheta)\geq 1/\sqrt{\frac 14 \left( \frac{L}{\ell}\right)^2-1}$, then $r_\vartheta Q_+$ and $r_\theta Q_+$ are disjoint.
\end{Lemma}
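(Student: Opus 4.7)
The plan is to exploit the fact that, viewed from the origin, the rectangle $Q_+=[L/2,L]\times[0,\ell]$ subtends an angle equal to $\arctan(2\ell/L)$, so that the two rotated copies lie in two disjoint angular sectors based at the origin.

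First I would check that every point $(x,y)\in Q_+$ satisfies $0\le y/x\le 2\ell/L$, the maximum being attained at the corner $(L/2,\ell)$; consequently $Q_+$ is contained in the closed sector
\[
S:=\{(r\cos\alpha,r\sin\alpha):r\ge 0,\ 0\le\alpha\le\arctan(2\ell/L)\}
\]
with apex at the origin. Rotating by $\vartheta$ and $\theta$, the sets $r_\vartheta Q_+$ and $r_\theta Q_+$ are contained in the rotated sectors $r_\vartheta S$ and $r_\theta S$ of the same angular aperture $\arctan(2\ell/L)$, opening at angles $\vartheta$ and $\theta$ respectively.

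Next I would observe that since $L/2>0$, neither rotated rectangle contains the origin, so it is enough to show $r_\vartheta S\cap r_\theta S=\{0\}$; this in turn follows from the strict inequality $\theta-\vartheta>\arctan(2\ell/L)$. To produce this inequality from the hypothesis, I would note the trivial bound $\sqrt{\tfrac14(L/\ell)^2-1}<\tfrac{L}{2\ell}$, which gives
\[
\tan(\theta-\vartheta)\ge\frac{1}{\sqrt{\tfrac14(L/\ell)^2-1}}>\frac{2\ell}{L};
\]
since both $\theta-\vartheta$ and $\arctan(2\ell/L)$ lie in $(0,\pi/2)$ (using $2\ell<L$), the tangent is increasing there and the desired angular inequality follows, yielding disjointness.

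I do not anticipate any genuine obstacle: the whole lemma is an elementary trigonometric observation. The somewhat peculiar-looking bound $1/\sqrt{\tfrac14(L/\ell)^2-1}=2\ell/\sqrt{L^2-4\ell^2}$ (equivalently $\sin(\theta-\vartheta)\ge 2\ell/L$) is presumably chosen for convenience later in the paper; for the disjointness claim itself, only the weaker consequence $\tan(\theta-\vartheta)>2\ell/L$ is actually used.
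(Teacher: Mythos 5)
Your proof is correct, and it packages the geometry a bit differently from the paper. The paper first reduces to $\vartheta=0$ and then separates the two sets by the single line $y=(\tan\theta)x$: it checks that this line meets $y=\ell$ at the point $(x_0,\ell)$ with $x_0=\ell/\tan\theta\leq\ell\sqrt{\tfrac14(L/\ell)^2-1}\leq L/2$ and with $|(x_0,\ell)|\leq L/2$, so that the region where the two rectangles could meet lies in the closed disc of radius $L/2$, which $Q_+$ touches only at $(L/2,0)$. Your version instead encloses $Q_+$ in the angular sector of aperture $\arctan(2\ell/L)$ with apex at the origin and concludes by disjointness of the rotated sectors; this avoids both the reduction to $\vartheta=0$ and the norm computation, and it makes visible that only the weaker strict bound $\tan(\theta-\vartheta)>2\ell/L$ is actually needed for disjointness. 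One small correction to your closing remark: the exact threshold $1/\sqrt{\tfrac14(L/\ell)^2-1}=2\ell/\sqrt{L^2-4\ell^2}$ (equivalently $\sin(\theta-\vartheta)\geq 2\ell/L$) is not only ``for convenience later''~---~it is precisely what makes the paper's own estimate $|(x_0,\ell)|\leq L/2$ work, and it is also the quantity that the construction in Lemma~\ref{cl.2} solves for when choosing the eccentricity $L/\ell$.
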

\begin{figure}[h]\begin{center}\includegraphics[width=10cm]{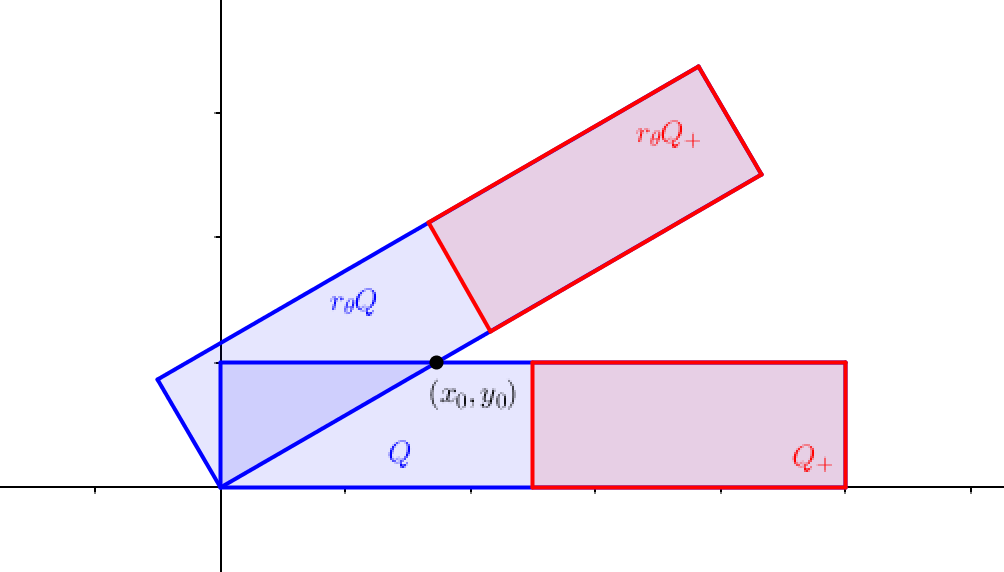} \caption{The rectangles $Q, Q_+, r_\theta Q$ and $r_\theta Q_+$}\label{fig.1}\end{center}\end{figure}
\begin{proof}
To prove this lemma, we can assume, without loss of generality, that one has $\vartheta=0$ (for otherwise, apply $r_{-\vartheta}$ to $r_\vartheta Q_+$ and $r_\theta Q_+$). Let $m:=\tan \theta$. Observe then that the lines $y=\ell$ and $y=m x$ intersect at $x_0=\ell/m\leq \ell \sqrt{\frac 14 \left( \frac{L}{\ell}\right)^2-1}\leq \frac L2$ and $y_0=\ell$. Since we also have:
$$
|(x_0,y_0)|\leq \ell\sqrt{\frac 14 \left(\frac{L}{\ell}\right)^2}=\frac L2,
$$
this shows indeed that $Q_+$ and $r_\theta Q_+$ are disjoint (see Figure~\ref{fig.1}).
\end{proof}

\begin{Lemma}\label{cl.2}
Assume that the sequence $(\theta_j)_{k\in\N}\subseteq (0,\pi/2)$ is such that one has:
\begin{equation}\label{eq.bilac}
0<\lambda<\liminfe_{j\to\infty} \frac{\theta_{j+1}}{\theta_j}\leq \limsupe_{j\to\infty} \frac{\theta_{j+1}}{\theta_j}<\mu<1.
\end{equation}
Let $\btheta:=\{\theta_j:j\in\N\}$.

There exists constants $d(\mu)>c(\mu)>0$ depending only on $\mu$ such that, for each $\epsilon>0$ and each integer $k\in\N^*$, one can find a standard rectangle $Q_k=[0,L_k]\times [0,\ell_k]$ and a subset $\btheta_k\subset\btheta$ satisfying $\#\btheta_k=k$ such that the following hold:
\begin{enumerate}
\item[(i)] $0\leq 2\ell_k\leq L_k\leq \epsilon$;
\item[(ii)] $c(\mu)\lambda^{-k}\leq\frac{L_k}{\ell_k}\leq d(\mu)\lambda^{-k}$;
\item[(iii)] $\left| \bigcup_{\theta\in\btheta_k} r_{\theta} Q_k\right|\geq \frac{k}{2}|Q_k|$.
\end{enumerate}
\end{Lemma}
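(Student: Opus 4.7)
The plan is to take for $\btheta_k$ a block of $k$ consecutive terms of the sequence $(\theta_j)$ starting at some carefully chosen index $N$, and to choose the aspect ratio $L_k/\ell_k$ of $Q_k$ just large enough for Lemma~\ref{cl.1} to force the rotated right halves $r_\theta Q_{k,+}$ ($\theta\in\btheta_k$) to be pairwise disjoint. Once that disjointness is secured, conclusion (iii) is automatic: each $r_\theta Q_{k,+}$ has measure $|Q_k|/2$ and sits inside $r_\theta Q_k$, so
\[
\Bigl|\bigcup_{\theta\in\btheta_k}r_\theta Q_k\Bigr|
\geq \sum_{\theta\in\btheta_k}|r_\theta Q_{k,+}|
= \tfrac{k}{2}|Q_k|.
\]

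Concretely, hypothesis (\ref{eq.bilac}) provides an index $N_0$ such that $\theta_{j+1}/\theta_j\in(\lambda,\mu)$ for all $j\geq N_0$, whence $(\theta_j)_{j\geq N_0}$ decreases strictly to $0$. I would then pick $N\geq N_0$ large enough that $\theta_N\leq\epsilon$, that the bounds $\alpha/2\leq\sin\alpha\leq\alpha$ apply on $[0,\theta_N]$, and that $\theta_N$ lies in a prescribed geometric window; this last point can be arranged because the lower bound $\theta_{j+1}/\theta_j>\lambda$ ensures that, once the sequence has entered any target scale, it takes at least one value in every geometric subinterval of logarithmic length $\geq\log(1/\lambda)$. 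Setting $\btheta_k:=\{\theta_N,\theta_{N+1},\ldots,\theta_{N+k-1}\}$ and denoting by $\delta_k:=\theta_{N+k-2}-\theta_{N+k-1}$ the smallest pairwise gap in $\btheta_k$, iterating the lacunarity estimates gives
\[
(1-\mu)\lambda^{k-2}\theta_N \leq \delta_k \leq (1-\lambda)\mu^{k-2}\theta_N.
\]

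I would then take $L_k\leq\epsilon$ and set $\ell_k:=\tfrac12 L_k\sin\delta_k$, so that $L_k/(2\ell_k)=1/\sin\delta_k$; equivalently, $\sqrt{(L_k/(2\ell_k))^2-1}=\cot\delta_k$. For any $\vartheta<\theta$ in $\btheta_k$, the gap $\theta-\vartheta\geq\delta_k$ gives $\tan(\theta-\vartheta)\geq\tan\delta_k = 1/\sqrt{(L_k/(2\ell_k))^2-1}$, so Lemma~\ref{cl.1} applies and $r_\vartheta Q_{k,+}$, $r_\theta Q_{k,+}$ are disjoint, which was the key ingredient in the first paragraph for (iii). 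Property (i) is immediate from $\sin\delta_k\leq 1$, giving $2\ell_k\leq L_k\leq\epsilon$. The main obstacle lies in (ii): while $L_k/\ell_k=2/\sin\delta_k$ is comparable to $1/\delta_k$ (up to a factor of $2$), the a priori estimates on $\delta_k$ above involve \emph{both} $\lambda^{k-2}$ and $\mu^{k-2}$, and pinning $L_k/\ell_k$ into the tight range $[c(\mu)\lambda^{-k},d(\mu)\lambda^{-k}]$ is precisely what the careful choice of geometric window on $\theta_N$ is designed to achieve, using the lacunary structure of $(\theta_j)$ from both sides.
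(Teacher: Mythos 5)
Your overall strategy (a block of $k$ consecutive angles, Lemma~\ref{cl.1} applied pairwise, an aspect ratio of order $\lambda^{-k}$) is the same as the paper's, but the execution has a genuine gap exactly where you flag it: property (ii) is never established, and the mechanism you propose cannot establish it. You tie the aspect ratio to the actual gap via $\ell_k=\tfrac12 L_k\sin\delta_k$, so $L_k/\ell_k=2/\sin\delta_k\approx 2/\delta_k$; but $\delta_k$ is only pinned down up to the interval $[(1-\mu)\lambda^{k-2}\theta_N,\ (1-\lambda)\mu^{k-2}\theta_N]$, whose endpoints differ by a factor of order $(\mu/\lambda)^{k}$. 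No choice of a ``geometric window'' for $\theta_N$ can remove that uncertainty, because it lives in the ratio $\delta_k/\theta_N$ (i.e., in how the ratios $\theta_{j+1}/\theta_j$ fluctuate inside $(\lambda,\mu)$), not in the location of $\theta_N$. The way out --- and what the paper does --- is to decouple the aspect ratio from the actual gap: use only the \emph{lower} bound $\delta_k\gtrsim(1-\mu)\lambda^{k}\,\theta_{N_0}$ (with $N_0$ the fixed index after which lacunarity holds), and \emph{define} $L_k/\ell_k$ to be a definite multiple of $\lambda^{-k}$ large enough that Lemma~\ref{cl.1} applies in this worst case; since Lemma~\ref{cl.1} only needs $L/\ell$ large enough, nothing forces you to take it as small as $2/\delta_k$, and (ii) then holds by construction. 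A second, related misstep is letting $N$ grow with $\epsilon$ and $k$ so that $\theta_N\leq\epsilon$: the smallness requirement (i) concerns the side lengths $L_k,\ell_k$, not the angles, and pushing $\theta_N\to 0$ makes the factor $\theta_N^{-1}$ in your aspect-ratio bounds blow up, destroying any hope of constants depending only on $\mu$. Keep the starting index fixed and shrink $L_k$ instead.

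A smaller but real error: $\delta_k:=\theta_{N+k-2}-\theta_{N+k-1}$ need not be the smallest pairwise gap in $\btheta_k$. The minimum pairwise gap equals the minimum \emph{consecutive} gap, but since the consecutive gaps are $\theta_{N+i}(1-\theta_{N+i+1}/\theta_{N+i})$ with ratios fluctuating anywhere in $(\lambda,\mu)$, the minimum can occur at any $i$, not necessarily the last one (e.g., angles $1,\,0.5,\,0.45,\,0.225$ give consecutive gaps $0.5,\,0.05,\,0.225$). As written, your application of Lemma~\ref{cl.1} to an arbitrary pair $\vartheta<\theta$ via ``$\theta-\vartheta\geq\delta_k$'' can fail. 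This is repairable --- replace $\delta_k$ by $\min_{0\leq i\leq k-2}(\theta_{N+i}-\theta_{N+i+1})$, which obeys the same two-sided bound you state --- but the repair feeds straight back into the unresolved problem with (ii) above. The first paragraph of your proposal (disjointness of the half-rectangles implies (iii)) and the verification of (i) are fine.
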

\begin{proof}
To prove this lemma, observe first that letting $m_j:=\tan \theta_j$ for all $j\in\N$, one clearly has:
$$
\lim_{j\to\infty}\frac{m_j}{\theta_j}=1,
$$
so that (\ref{eq.bilac}) also holds for the sequence $(m_j)_{j\in\N}$. There hence exists an index $j_0\in\N$ such that, for all $j\geq j_0$, one has $\lambda\leq\frac{m_{j+1}}{m_j}\leq \mu$ (we may also and will assume that one has $m_{j_0}\leq 1$). For the sake of clarity, we shall now consider that $j_0=0$ and compute, for an integer $0\leq j<k$:
$$
\tan(\theta_j-\theta_k)=\frac{m_j-m_k}{1+m_jm_k}\geq\frac 12 (m_j-m_k).
$$
Since we also have, for every integer $0\leq j<k$:
$$
\lambda^{k-j} m_j\leq m_k\leq \mu^{k-j} m_j,
$$
we obtain under the same assumptions on $j$:
$$
\tan(\theta_j-\theta_k)\geq\frac 12 (m_j-m_k)\geq \frac 12 (\mu^{j-k}-1)m_k\geq \frac 12 \lambda^k (\mu^{-1}-1) m_0.
$$
Now choose real numbers $0\leq 2\ell\leq L\leq\epsilon$ (we write $L$ and $\ell$ instead of $L_k$ and $\ell_k$ here, for the index $k$ remains constant all through the proof) satisfying:
$$
\left(\frac{L}{\ell}\right)^2= 4+\lambda^{-2k} [(\mu^{-1}-1)m_0]^{-2}.
$$
It is clear that one has:
$$
\frac{L}{\ell}=\lambda^{-k} \sqrt{4 \lambda^{2k}+[(\mu^{-1}-1)m_0]^{-2}},
$$
so that (ii) holds if we take, for example, $c(\mu):=\sqrt{[(\mu^{-1}-1)m_0]^{-2}}$ and $d(\mu):=\sqrt{4+[(\mu^{-1}-1)m_0]^{-2}}$. On the other hand, (i) is clearly satisfied by assumption.

In order to show (iii), define $Q:=[0,L]\times [0,\ell]$ and observe that one has $$
\tan (\theta_j-\theta_k)\geq \frac{1}{\sqrt{\frac 14 \left(\frac{L}{\ell}\right)^2-1}},
$$
for all integers $j$ satisfying $j<k$. According to Lemma~\ref{cl.1}, this ensures that the family $\{r_{\theta_j}Q_+:j\in\N, j<k\}$ consists of pairwise disjoints sets; in particular we get:
$$
\left| \bigcup_{j=0}^{k-1} r_{\theta_j} Q\right|\geq \left| \bigsqcup_{j=0}^{k-1} r_{\theta_j} Q_+\right| =k\cdot \frac{|Q|}{2},
$$
(we used $\sqcup$ to indicate a disjoint union) and the lemma is proved.
\end{proof}

We now turn to studying maximal operators associated to families of standard rectangles.
\section{Maximal operators associated to lacunary sequences of directions}\label{sec.3}
From now on, given a family $\calR$ of standard rectangles and a set $\btheta\subseteq [0,2\pi)$, we let $r_{\btheta} \calR:=\{ r_{\theta} Q: Q\in\calR, \theta\in\btheta\}$, and we define, for $f:\R^2\to\R$ measurable:
$$
M_\calR f(x):=\sup\left\{\frac{1}{|Q|}\int_{\tau(Q)} |f|: Q\in\calR, \tau\text{ translation}, x\in\tau(Q)\right\},
$$
and:
$$
M_{r_{\btheta}\calR} f(x):=\sup\left\{\frac{1}{|R|}\int_{\tau(R)} |f|: R\in r_{\btheta}\calR, \tau\text{ translation}, x\in\tau (R)\right\}.
$$
Notice, in particular, that in case one has $\inf\{\diam R:R\in\calR\}=0$, $M_\calR$ and $M_{r_{\btheta}\calR}$ are the maximal operators associated to the translation-invariant differentiation bases $\calB$ and $\calB_{\btheta}$ defined respectively by: $$\calB:=\{\tau(Q):Q\in\calR, \tau\text{ translation}\}$$ and $$
\calB_{\btheta}:=\{\tau(r_\theta Q):Q\in\calR, \theta\in\btheta,\tau\text{ translation}\}.
$$

The next proposition will be useful in order to study the maximal operator $M_{r_{\btheta}\calR}$.
Observe that it has the flavour of {\scshape Stokolos}' \cite[Lemma~1]{STOKOLOS1988}.
\begin{Proposition}\label{prop.2}
Assume that $(\theta_j)_{j\in\N}\subseteq (0,2\pi)$ satisfies:
$$
0<\lambda<\liminfe_{j\to\infty} \frac{\theta_{j+1}}{\theta_j}\leq \limsupe_{j\to\infty} \frac{\theta_{j+1}}{\theta_j}<\mu<1,
$$
and let $\btheta:=\{\theta_j:j\in\N\}$.
There exists a (countable) family $\calR$ of standard rectangles in $\R^2$ which is totally ordered by inclusion, verifies $\inf\{\diam R:R\in\calR\}=0$ and satisfies the following property: for any $k\in\N^*$, there exists sets $\Theta_k\subseteq\R^2$ and $Y_k\subseteq\R^2$ satisfying the following conditions:
\begin{enumerate}
\item[(i)] $|Y_k|\geq \kappa(\mu)\cdot k\lambda^{-k} |\Theta_k|$;
\item[(ii)] for any $x\in Y_k$, one has $M_{r_{\btheta}\calR} \chi_{\Theta_k} f(x)\geq \kappa'(\mu)\lambda^k$;
\end{enumerate}
here,  $\kappa(\mu)>0$ and $\kappa'(\mu)>0$ are two constants depending only on $\mu$.
\end{Proposition}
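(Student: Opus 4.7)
The plan is to extract the configurations produced by Lemma~\ref{cl.2} and glue them into a nested family. First I would discard finitely many terms of $\btheta$ so that $\btheta\subseteq(0,\pi/4]$ (permissible since $\theta_j\to 0$) and so that the ratio bound $\lambda<\theta_{j+1}/\theta_j<\mu$ and the estimate $m_0:=\tan\theta_0\leq 1$ hold from index $0$, as in the proof of Lemma~\ref{cl.2}. For each $k\geq 1$ I apply Lemma~\ref{cl.2} to obtain a standard rectangle $Q_k=[0,L_k]\times[0,\ell_k]$ with $L_k/\ell_k\asymp\lambda^{-k}$ and a $k$-point subset $\btheta_k\subseteq\btheta$. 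Once $L_k$ is chosen, $\ell_k$ is essentially determined, and if $L_k$ decreases fast enough (for instance $L_k:=\lambda^{2k}$) then $(\ell_k)$ is decreasing too. Hence $Q_1\supset Q_2\supset\cdots$, and setting $\calR:=\{Q_k:k\geq 1\}$ gives a countable totally ordered family with $\inf\{\diam Q_k\}=0$.

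The crucial choice is then
\[
\Theta_k:=[0,\ell_k]^2\subseteq Q_k,\qquad Y_k:=\bigsqcup_{\theta\in\btheta_k}r_\theta Q_{k,+},
\]
where the union is pairwise disjoint by Lemma~\ref{cl.1}. For~(i) the computation is immediate: $|Y_k|=k|Q_{k,+}|=kL_k\ell_k/2$ and $|\Theta_k|=\ell_k^2$, so
\[
\frac{|Y_k|}{|\Theta_k|}=\frac{kL_k}{2\ell_k}\geq\frac{c(\mu)}{2}\,k\lambda^{-k}
\]
by Lemma~\ref{cl.2}(ii), giving $\kappa(\mu):=c(\mu)/2$.

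For~(ii), fix $x\in Y_k$ and pick $\theta\in\btheta_k$ with $x\in r_\theta Q_{k,+}\subseteq r_\theta Q_k$; apply the maximal operator with $R:=r_\theta Q_k$ and trivial translation. The only piece of $\Theta_k=[0,\ell_k]^2$ lying outside $r_\theta Q_k$ is the triangle $\{(x,y)\in[0,\ell_k]^2:y<x\tan\theta\}$, of area $\tfrac12\ell_k^2\tan\theta\leq\tfrac12\ell_k^2$ since $\theta\leq\pi/4$. Hence $|r_\theta Q_k\cap\Theta_k|\geq\ell_k^2/2$ and
\[
M_{r_\btheta\calR}\chi_{\Theta_k}(x)\geq\frac{|r_\theta Q_k\cap\Theta_k|}{|r_\theta Q_k|}\geq\frac{\ell_k^2/2}{L_k\ell_k}=\frac{\ell_k}{2L_k}\geq\frac{\lambda^k}{2d(\mu)},
\]
so $\kappa'(\mu):=1/(2d(\mu))$ works.

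The conceptual subtlety is not any single estimate but rather the choice of $\Theta_k$: taking $\Theta_k=Q_k$ would destroy the $\lambda^k$ threshold in~(ii), while shrinking $\Theta_k$ below scale $\ell_k$ would destroy the $k\lambda^{-k}$ ratio in~(i). The leftmost square $[0,\ell_k]^2$ is precisely the scale at which a set based at the pivot of rotation survives, up to a controlled triangle, inside every $r_\theta Q_k$ for $\theta\in\btheta_k\subseteq(0,\pi/4]$, while remaining small enough relative to the disjoint union $Y_k$ to produce the correct geometric gain.
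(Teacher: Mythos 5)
Your proof is correct and follows essentially the same route as the paper: nested rectangles $Q_k$ from Lemma~\ref{cl.2}, a set $\Theta_k$ of scale $\ell_k$ anchored at the pivot of rotation, and $Y_k$ a union of rotated copies, with the same two estimates. The only cosmetic differences are your choice of the square $[0,\ell_k]^2$ (losing a triangle of area $\leq\tfrac12\ell_k^2$) where the paper uses the disk $B(0,\ell_k)$ (which sits exactly as a quarter-disk inside each $r_\theta Q_k$), and your use of the disjoint half-rectangles $r_\theta Q_{k,+}$ for $Y_k$ instead of the full rectangles.
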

\begin{proof}
Define $\calR=\{Q_k:k\in\N^*\}$ where the sequence $(Q_k)_{k\in\N^*}$ is defined inductively as follows.
We choose $Q_1=[0,L_1]\times [0,\ell_1]$ and $\btheta_1\subseteq\btheta$ associated to $k=1$ and $\epsilon=1$ according to Lemma~\ref{cl.2}. Assuming that $Q_1,\dots, Q_k$ have been constructed, for some integer $k\in\N^*$, we choose $Q_{k+1}=[0,L_{k+1}]\times [0,\ell_{k+1}]$ and $\btheta_{k+1}$ associated to $k+1$ and $\epsilon=\min(\ell_{k},1/k)$ according to Lemma~\ref{cl.2}. Since the sequence $(Q_k)_{k\in\N^*}$ is a nonincreasing sequence of rectangles, it is clear that $\calR$ is totally ordered by inclusion. It is also clear by construction that one has $\inf\{\diam R:R\in\calR\}=0$.

Now fix $k\in\N^*$ and define $\Theta_k:=B(0,\ell_k)$ and $Y_k:=\bigcup_{\theta\in\btheta_k} r_\theta Q_k$. Compute hence, using [Claim~\ref{cl.2}, (ii) and (iii)]:
$$
|Y_k|\geq \frac 12 k L_k\ell_k=\frac{1}{2\pi} k \frac{L_k}{\ell_k}\cdot \pi\ell_k^2 \geq \frac{c(\mu)}{2\pi}\cdot k\lambda^{-k} |\Theta_k|,
$$
so that (i) is proved in case one lets $\kappa(\mu):=\frac{c(\mu)}{2\pi}$.

\begin{figure}[t]\begin{center}\includegraphics[width=9cm]{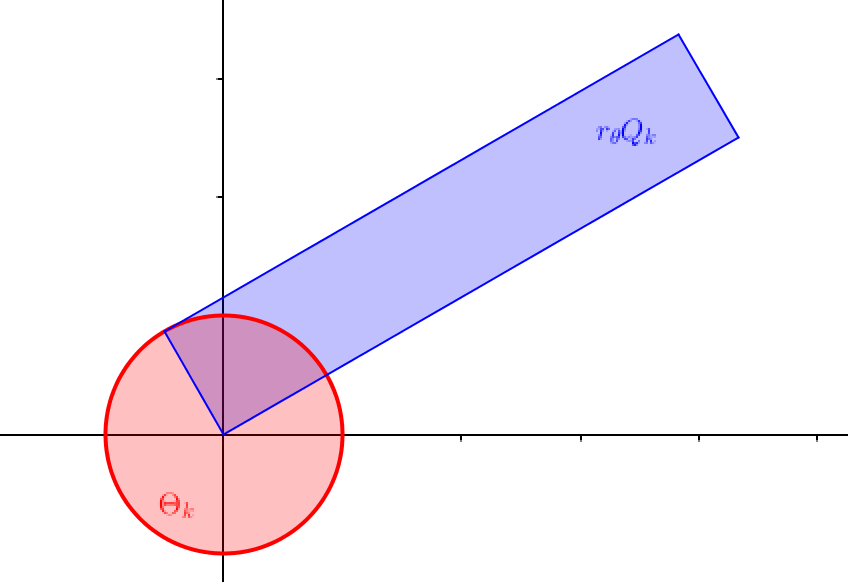} \caption{The intersection $\Theta_k\cap r_\theta Q_k$}\label{fig.2}\end{center}\end{figure}
For $x\in Y_k$, choose $\theta\in\btheta_k$ for which one has $x\in r_\theta Q_k$ and observe that one has (see Figure~\ref{fig.2}): 
$$
M_{r_{\btheta}\calR} \chi_{\Theta_k}(x)\geq\frac{|\Theta_k\cap r_\theta Q_k|}{|Q_k|}=\frac{\frac 14 \cdot \pi\ell_k^2}{L_k\ell_k}=\frac{\pi}{4} \cdot\frac{\ell_k}{L_k}\geq \frac{\pi}{4d(\mu)} \lambda^k,
$$
which finishes the proof of (ii) if we let $\kappa'(\mu):=\frac{\pi}{4d(\mu)}$.
\end{proof}

For our purposes, an \emph{Orlicz function} is a convex and increasing function $\Phi:[0,\infty)\to [0,\infty)$ satisfying $\Phi(0)=0$; we then let $L^\Phi(\R^2)$ denote the set of all measurable functions $f$ in $\R^2$ for which $\Phi(|f|)$ is integrable (for $\Phi(t)=t^p$, $p\geq 1$ this yields the usual Lebesgue space $L^p(\R^2)$, while for $\Phi(t)=\Phi_0(t):=t(1+\log_+t)$ we get the Orlicz space $L\log_+L(\R^2):=L^{\Phi_0}(\R^2)$). Recall that a sublinear operator $T$ is said to be \emph{of weak type $(\Phi,\Phi)$} in case there exists a constant $C>0$ such that, for all $f\in L^\Phi(\R^2)$ and all $\alpha>0$, one has:
$$
|\{x\in\R^2:Tf(x)>\alpha\}|\leq\int_{\R^2} \Phi\left(\frac{|f|}{\alpha}\right).
$$
Whenever $\Phi(t)=t^p$ for $p\geq 1$, we shall say that $T$ has \emph{weak type $(p,p)$}.\\

The next result specifies the announced Theorem~\ref{thm.1}. It is mainly a consequence of the preceding proposition and some standard techniques as developed in {\scshape Moonens} and {\scshape Rosenblatt} \cite{MR2012}.
\begin{Theorem}\label{thm.2}
Assume that $(\theta_j)_{j\in\N}\subseteq (0,2\pi)$ satisfies:
$$
0<\liminfe_{j\to\infty} \frac{\theta_{j+1}}{\theta_j}\leq\limsupe_{j\to\infty}\frac{\theta_{j+1}}{\theta_j}<1,
$$
and let $\btheta:=\{\theta_j:j\in\N\}$.
There exists a (countable) family $\calR$ of standard rectangles in $\R^2$ with $\inf\{\diam R:R\in\calR\}=0$, satisfying the following conditions:
\begin{enumerate}
\item[(i)] $M_\calR$ has weak type $(1,1)$, and hence the associated differentiation basis $\calB$ differentiates $L^1(\R^2)$;
\item[(ii)] for any Orlicz function $\Phi$ satisfying $\Phi=o(\Phi_0)$ at $\infty$, $M_{r_{\btheta}\calR}$ fails to be of weak type $(\Phi,\Phi)$. In particular, $M_{r_{\btheta} \calR}$ fails to have weak type $(1,1)$, and hence the associated differentiation basis $\calB_{\btheta}$ fails to differentiate $L^1(\R^2)$.
\end{enumerate}
\end{Theorem}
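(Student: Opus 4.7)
The plan is to take $\calR$ to be the countable family of standard rectangles constructed in Proposition~\ref{prop.2}: by that construction $\calR$ is already totally ordered by inclusion and satisfies $\inf\{\diam R:R\in\calR\}=0$. Assertion~(ii) will then follow from Proposition~\ref{prop.2} combined with an elementary asymptotic argument, whereas assertion~(i) will be obtained by further constraining the inductive choice of the $Q_k$ so as to fit into the Moonens--Rosenblatt~\cite{MR2012} framework.

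For~(ii), I test $M_{r_{\btheta}\calR}$ on $f_k:=\chi_{\Theta_k}$ at the level $\alpha_k:=\kappa'(\mu)\lambda^k$, where $\Theta_k$ and $Y_k$ are the sets furnished by Proposition~\ref{prop.2}. Since $M_{r_{\btheta}\calR}f_k\geq\alpha_k$ on $Y_k$ and $|Y_k|\geq\kappa(\mu)\,k\lambda^{-k}|\Theta_k|$, a weak $(\Phi,\Phi)$ inequality for $M_{r_{\btheta}\calR}$ with constant $C$ would yield
\begin{equation*}
\kappa(\mu)\,k\lambda^{-k}|\Theta_k|\leq|Y_k|\leq C|\Theta_k|\,\Phi(1/\alpha_k).
\end{equation*}
Writing $u_k:=1/\alpha_k=\lambda^{-k}/\kappa'(\mu)$, this becomes $\Phi(u_k)\geq c_1\,k\,u_k$ for some positive constant $c_1$. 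Since $u_k\to\infty$ and $\log u_k\sim k\log(1/\lambda)$ as $k\to\infty$, this forces
\begin{equation*}
\liminf_{t\to\infty}\frac{\Phi(t)}{t\log_+t}\geq\frac{c_1}{\log(1/\lambda)}>0,
\end{equation*}
in contradiction with the hypothesis $\Phi=o(\Phi_0)$ at infinity. Specialising to $\Phi(t)=t$ yields the failure of weak $(1,1)$, and the resulting failure of $\calB_{\btheta}$ to differentiate $L^1(\R^2)$ then follows by the Sawyer--Stein principles recalled in the introduction.

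For~(i), the inductive construction in the proof of Proposition~\ref{prop.2} only requires $\epsilon\leq\min(\ell_k,1/k)$ when selecting $Q_{k+1}$, leaving ample room to further impose any additional rapidly decreasing condition on the size of $Q_{k+1}$. Choosing $\epsilon$ small enough (for instance, super-geometrically decreasing) arranges that the reference rectangles $Q_k$ live at well-separated scales; a scale-by-scale Calder\'on--Zygmund argument in the spirit of~\cite{MR2012} then bounds the distribution function of $M_\calR$ applied to a Calder\'on--Zygmund decomposition of $f\in L^1(\R^2)$, yielding the weak $(1,1)$ bound and hence, again by Sawyer--Stein, the differentiation of $L^1(\R^2)$ by $\calB$.

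The main obstacle is the weak $(1,1)$ bound in~(i). Since $L_k/\ell_k\sim\lambda^{-k}$ grows exponentially in $k$, the trivial pointwise domination $M_{Q_k}f\leq (L_k/\ell_k)\,M_{\mathrm{HL}}f$ gives an estimate that is not summable in $k$; moreover $M_\calR$ is only pointwise controlled by the strong maximal function, which itself fails weak $(1,1)$. One really has to combine the countability of $\calR$ with the freedom in shrinking the $Q_k$ super-geometrically in order to replace a blunt union bound by the scale-separated Calder\'on--Zygmund argument of~\cite{MR2012}; this is precisely where the ``standard techniques'' alluded to by the author enter the proof.
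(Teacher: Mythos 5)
Your treatment of part (ii) is essentially the paper's argument in a slightly different packaging: the paper normalizes $f_k:=[1/\kappa'(\mu)]\lambda^{-k}\chi_{\Theta_k}$ and compares $|Y_k|\gtrsim k\lambda^{-k}|\Theta_k|\gtrsim\int\Phi_0(f_k)$ with $\int\Phi(Cf_k)=o\bigl(\int\Phi_0(f_k)\bigr)$, while you test $\chi_{\Theta_k}$ at level $\alpha_k$; both yield the same contradiction $\Phi(u_k)\gtrsim k\,u_k\sim u_k\log u_k/\log(1/\lambda)$. One small imprecision: from information at the points $u_k$ alone you may only conclude that $\limsup_{t\to\infty}\Phi(t)/\Phi_0(t)>0$ (or the liminf along the sequence $u_k$), not the full $\liminf_{t\to\infty}$ as you wrote; this is harmless, since either statement contradicts $\Phi=o(\Phi_0)$.

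Part (i), however, contains a genuine gap, and moreover misdiagnoses where the difficulty lies. You never actually prove the weak $(1,1)$ bound: you assert that it follows from ``a scale-by-scale Calder\'on--Zygmund argument in the spirit of \cite{MR2012}'' after further shrinking the $Q_k$ super-geometrically, but no such argument is given, and the claim that extra scale separation is needed is incorrect. The point the paper uses (citing \cite[Claim~1]{STOKOLOS2005}) is that total ordering of $\calR$ by inclusion \emph{already} implies the weak $(1,1)$ inequality, with no further hypotheses on the scales. The reason is an elementary Vitali-type engulfing argument: if two translates $\tau(Q_j)$ and $\sigma(Q_k)$ with $j\leq k$ intersect, then since $Q_k\subseteq Q_j$ both side lengths of $\sigma(Q_k)$ are dominated by those of $\tau(Q_j)$, so $\sigma(Q_k)$ is contained in the concentric triple of $\tau(Q_j)$. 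Hence, given any finite cover of a compact subset of $\{M_\calR f>\alpha\}$ by rectangles $R$ with $\frac{1}{|R|}\int_R|f|>\alpha$, a greedy selection ordered by decreasing shape produces a pairwise disjoint subfamily whose triples still cover, and the bound $|\{M_\calR f>\alpha\}|\leq \frac{9}{\alpha}\|f\|_1$ follows. Your ``main obstacle'' paragraph — comparing $M_\calR$ to the strong maximal operator and worrying about the non-summability of $L_k/\ell_k$ over $k$ — is therefore a red herring: no union bound over $k$ is taken, and the pointwise domination by $M_{\mathrm{HL}}$ plays no role. As written, the central assertion of (i) rests on an unspecified argument, so the proof is incomplete; replacing that paragraph by the covering argument above (or the citation to Stokolos) repairs it.
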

\begin{proof}
Begin by choosing real numbers $0<\lambda<\mu<1$ such that one has:
$$
0<\lambda<\liminfe_{j\to\infty} \frac{\theta_{j+1}}{\theta_j}\leq \limsupe_{j\to\infty} \frac{\theta_{j+1}}{\theta_j}<\mu<1,
$$
and keep the notations of Proposition~\ref{prop.2}.

Let now $\calR$ be the family of rectangles given by Proposition~\ref{prop.2}. Observe first that, since $\calR$ is totally ordered by inclusion, it follows \emph{e.g.} from \cite[Claim~1]{STOKOLOS2005} that $M_\calR$ satisfies a weak $(1,1)$ inequality.

In order to show (ii), define, for $k$ sufficiently large, $f_k:=[1/\kappa'(\mu)]\cdot \lambda^{-k} \chi_{\Theta_k}$, where $\Theta_k$ and $Y_k$ are associated to $k$ and $\calR$ according to Proposition~\ref{prop.2}.
\begin{Claim}\label{cl.Mphi} For each sufficiently large $k$, we have:
$$
|\{x\in\R^2:M_\calR f_k(x)\geq 1\}|\geq c_1(\lambda,\mu)\int_{\R^2} \Phi_{0}(f_k),
$$
where $c_1(\lambda,\mu):=\frac{2\log\frac{1}{\lambda}}{\kappa(\mu)\cdot \kappa'(\mu)}$ is a constant depending only on $\lambda$ and $\mu$.
\end{Claim}
\begin{proof}[Proof of the claim]
To prove this claim, one observes that for $x\in Y_k$ we have $M_\calR f_k(x)\geq 1$ according to [Proposition~\ref{prop.2}, (ii)]. Yet, on the other hand, one computes, for $k$ sufficiently large:
\begin{multline*}
\int_{\R^2} \Phi_{0}(f_k)\leq \frac{1}{\kappa'(\mu)}\cdot \lambda^{-k} |\Theta_k| \left[1-\log_+\kappa'(\mu)+k\log\frac{1}{\lambda}\right]\\ \leq \frac{2\log\frac{1}{\lambda}}{\kappa'(\mu)}\cdot k\lambda^{-k} |\Theta_k|\leq
c_1(\lambda,\mu)  \cdot |Y_k|,\end{multline*}
and the claim follows.
\end{proof}

\begin{Claim}
For any $\Phi$ satisfying $\Phi=o(\Phi_{0})$ at $\infty$ and for each $C>0$, we have:
$$
\lim_{k\to\infty}\frac{\int_{\R^2} \Phi_{0}(|f_k|)}{\int_{\R^2} \Phi(C|f_k|)}=\infty.
$$
\end{Claim}
\begin{proof}[Proof of the claim]
Compute for any $k$:
\begin{eqnarray*}
\frac{\int_{\R^2} \Phi(C|f_k|)}{\int_{\R^2} \Phi_{0}(|f_k|)} & = & \frac{\Phi(\lambda^{-k}C/\kappa'(\mu))}{\Phi_{0}(\lambda^{-k}/\kappa'(\mu))}\\
& = & \frac{\Phi( \lambda^{-k}C/\kappa'(\mu))}{\Phi_{0}(\lambda^{-k}C/\kappa'(\mu))}
 \frac{\Phi_{0}( \lambda^{-k}C/\kappa'(\mu))}{\Phi_{0}(\lambda^{-k}/\kappa'(\mu))},
\end{eqnarray*}
observe that the quotient $ \frac{\Phi_{0}( \lambda^{-k}C/\kappa'(\mu))}{\Phi_{0}(\lambda^{-k}/\kappa'(\mu))}$ is 
bounded as $k\to\infty$ by a constant independent of $k$, while by assumption the quotient 
$\frac{\Phi( \lambda^{-k}C/\kappa'(\mu))}{\Phi_{0}(\lambda^{-k}C/\kappa'(\mu))}$ tends to zero as $k\to\infty$. The claim is proved.
\end{proof}

We now finish the proof of Theorem~\ref{thm.2}. To this purpose, fix $\Phi$ an Orlicz function satisfying $\Phi=o(\Phi_{0})$ at $\infty$ 
and assume that there exists a constant $C>0$ such that, for any $\alpha>0$, one has:
$$
|\{x\in\R^2:M_\calR f(x)>\alpha\}|\leq\int_{\R^2} \Phi\left(\frac{C|f|}{\alpha}\right).
$$
Using Claim~\ref{cl.Mphi}, we would then get, for each $k$ sufficiently large:
$$
0<c_1(\lambda,\mu)\int_{\R^2}\Phi_{0}(f_k)\leq \left|\left\{x\in\R^2:M_\calR f_k(x)> \frac 12 \right\}\right|\leq \int_{\R^n} \Phi({2Cf_k}),
$$ contradicting the previous claim and proving the theorem.\end{proof}
\begin{Remark}
If we are solely interested in the weak $(1,1)$ behaviour of the maximal operators $M_\calR$ and $M_{r_{\btheta}\calR}$, observe that Theorem~\ref{thm.2} in particular applies to $\Phi(t)=t$, ensuring that the maximal operator $M_{r_{\btheta}\calR}$ also fails to have weak type $(1,1)$.

Moreover, as pointed out by the referee, the construction, given a sequence of distinct angles $\btheta=(\theta_j)_j\subseteq (0,\pi/2)$, of a countable family $\calR$ of rectangles for which $M_\calR$ is of weak type $(1,1)$ while $M_{r_{\btheta}\calR}$ is not, can be done almost immediately from Lemma~\ref{cl.1}~---~and does not require a growth condition on the sequence $\btheta$.

To see this, observe that for each $k$, it is easy, according to Lemma~\ref{cl.1} and making $L_k/\ell_k \gg 1$ large enough, to construct a rectangle $Q_k=[0,L_k]\times [0,\ell_k]$ such that the rectangles $r_{\theta_j}Q_{k,+}$, $0\leq j\leq k$ are pairwise disjoint. We can also inductively construct $(Q_k)$ such that one has $Q_{k+1}\subseteq Q_k$ for all $k\in\N$. Hence $\calR:=\{Q_k:k\in\N\}$ is totally ordered by inclusion, ensuring that $M_\calR$ has weak type $(1,1)$.

On the other hand, define for $k\in\N$ a function $f_k:=|Q_k|\frac{\chi_{B(0,\ell_k)}}{|B(0,\ell_k)|}$.
For all $x\in Y_k:=\bigcup_{j=0}^k r_{\theta_j} Q_k$, choose an integer $0\leq j\leq k$ for which one has $x\in r_{\theta_j} Q_k$ and compute (see Figure~\ref{fig.2} again):
$$
M_{r_{\btheta}\calR} f_k(x)\geq \frac{|Q_k|}{|B(0,\ell_k)|} \frac{|B(0,\ell_k)\cap r_{\theta_j}Q_k|}{|Q_k|}=\frac 14.
$$
It hence follows that one has:
\begin{multline*}
(k+1)\|f_k\|_1=(k+1) |Q_k|=2 (k+1) |Q_{k,+}|\\ \leq 2 |Y_k|\leq 2\left|\left\{x\in\R^2:M_{r_{\btheta}\calR}f_k(x)\geq\frac 14\right\}\right|,
\end{multline*}
so that $M_{r_{\btheta}\calR}$ cannot have weak type $(1,1)$.
\end{Remark}
\begin{Remark}
In [Theorem~\ref{thm.2}, (ii)], it is not clear to us whether or not the space $L\log L(\R^2)$ is sharp; we don't know, for example, whether or not $\calB_{\btheta}$ differentiates $L\log^{1+\epsilon} L(\R^2)$ for $\epsilon\geq 0$.
\end{Remark}

\paragraph{Acknowledgements.} I would like to thank my colleague and friend Emma D'Aniello for her careful reading of the first manuscript of this paper. I also express my gratitude to the referee for his/her careful reading of the paper and his/her nice suggestions which were of a great help to improve it.

\end{document}